\newtheorem{theorem}{Theorem}[section]
\newtheorem{lemma}[theorem]{Lemma}
\theoremstyle{remark} 
\begin{document}

\title[Geometric Quantization of the vortices]{The Dimension of the Hilbert space of geometric quantization of vortices on a Riemann surface}

\author{Rukmini Dey*, Saibal Ganguli**}
\address{* I.C.T.S.-T.I.F.R.,\\
Bengaluru, India.\\
email: rukmini@icts.res.in, rukmini.dey@gmail.com }

\address{** Harish Chandra Research Institute, Allahabad \\
email: saibalgan@gmail.com}

\maketitle

\begin{abstract}
In this article we  calculate  the dimension of the Hilbert space of K\"{a}hler quantization of the moduli space of vortices on a Riemann surface. This dimension  is given by the holomorphic Euler characteristic of the quantum line bundle.
\end{abstract}

\vskip 5mm

Keywords: Vortex moduli space, Symmetric product of a Riemann surface, Geometric quantization, Holomorphic Euler charateristics.

\vskip 2mm 

Mathematics  Subject Classification (MSC2010) :  mathematical physics ( 81V99), differential geometry (58A32).

\section{{\bf Introduction}}
Given a symplectic manifold $(M, \omega)$, with $\omega $ integral (i.e.  its cohomology class is in $H^2(M, {\mathbb Z})$), 
geometric pre-quantization is the construction of a Hermitian line bundle with a connection (called the pre-quantum line bundle) whose curvature $\rho$ is proportional to the symplectic form. This is always possible as long as $\omega$ is integral. This method of quantization  developed by Kostant and Souriau,  assigns to functions $f \in C^{\infty}(M)$, an operator, $\hat{f} =-i \nabla_{X_f} +   f$ acting on the Hilbert space of square integrable sections of $L$ (the wave functions). Here $\nabla = d - i \theta$ where locally $\omega = d \theta$ and $X_f$ is defined by $\omega(X_f, \cdot) = - df(\cdot)$. We have taken $ \hbar=1$.  The general reference for this is Woodhouse, ~\cite{W}.  

This assignment has the property that  that the Poisson bracket (induced by  the symplectic form), namely, 
$\{ f_1, f_2 \}_{PB} $ corresponds  to an operator proportional to the commutator $[\hat{f}_1, \hat{f}_2]$ for any two functions $f_1 , f_2$. 

The Hilbert space of pre-quantization is usually too huge for most purposes. 
Geometric quantization involves  construction of a polarization of the symplectic manifold such that we now take polarized sections of the line bundle, yielding a finite dimensional Hilbert space in most cases. However, $\hat{f}$ does not 
map the polarized Hilbert space to the polarized Hilbert space  in general. Thus only a few observables from the set of all  $f \in C^{\infty}(M)$ are quantizable.

We mention the general theory behind holomorphic quantization.

When $M$ is a compact  K\"{a}hler manifold,  $\omega$ is an integral  K\"{a}hler form, and $L$ the prequantum line bundle,   one can take holomorphic square integrable sections of $L^{\otimes \mu}$ for $\mu \in {\mathbb Z}$ large
enough as the Hilbert space of the quantization. (Recall $L$ might not have any global sections so we need to have high enough tensor product, hence we need $L^{\otimes \mu}$).   $L^{\otimes \mu}$ has curvature proportional to $\mu \omega$.
Let $\{ z_1, ....z_n\}$ be local co-ordinates on the K\"{a}hler manifold. 
The polarized sections are given by $\nabla_{\bar{\zeta}} \psi =0$ where $\nabla $ is the covariant dervative w.r.t. a
connection $\theta$ (i.e. $\mu  \omega = d \theta$ locally) and $\bar{\zeta} = \{ \frac{\partial}{\partial \bar{z_1}}, ....,  
 \frac{\partial}{\partial \bar{z_n}} \}$ and $\psi$ is a polarized section of $L^{\otimes \mu}$. Let $\theta = \sum_{i=1}^n f_i d z_i + \sum_{i=1}^n g_i d \bar{z}_i$. Then the equation for $\psi$ is
$\frac{\partial \psi}{\partial \bar{z}_i }- i g_i \psi =0$, $i=1,...,n$. One can show that the most general solution of this set of pde's is $\psi = f_0(z, \bar{z}) h(z)$, where $h(z)$ is a holomorphic section  of $L^{\otimes \mu}$ and $f_0(z, \bar{z})$ satisfies
the equation $\frac{\partial f_0}{\partial \bar{z}_i} - i g_i f_0 =0$, $i=1,...,n$. Since $f_0$ is completely determined by $\theta$,  there is a $1$ to $1$ correspondence between 
$\psi$ and $h$ i.e. polarized sections and holomorphic sections.

\vspace{.2in}

Let $(X, \omega)$ be a compact  integral K\"{a}hler manifold such that $L$ is a holomorphic  line bundle whose curvature is 
proportional to the K\"{a}hler form $\omega$. Let $L^{\otimes \mu}$ be the quantum line bundle for any $\mu \geq \mu_0 \in {\mathbb Z}^+$. It is a positive line bundle and $\mu_0$ has been chosen 
such that $H^q(X, L^{\otimes \mu}) =0$ for all $q >0$. By Kodaira's theorem, such a $\mu_0$ exists (Theorem B, \cite{GH}, page 159, taking $E$ to be trivial). Let the Hilbert space of quantization be the space of holomorphic sections
of $L^{\otimes \mu}$. It should be noted that since the $X$ is compact square integrability of holomorphic sections follows automatically.

The dimension of the Hilbert space of geometric quantization as defined above for any compact K\"{a}hler manifold with integral K\"{a}hler form $ \mu \omega$
is given by the holomorphic Euler characteristic of $L^{\otimes \mu}$ for all  $\mu \geq  \mu_0 \in {\mathbb Z}^+$, where $\mu_0$ is as above. This well known fact follows trivially since the holomorphic Euler char is the alternating sum of dimensions of the cohomology of $L^{\otimes \mu}$.
Since only $H^0$ survives, we get that the dimension of the space of global sections of $L^{\otimes \mu}$ is precisely the holomorphic Euler charateristic.

\vspace{.2in}

The main aim of this article is to calculate the dimension  of the Hilbert space of geometric quantization of the vortex moduli space.

The vortex equations are as follows. Let $\Sigma$ be a compact Riemann 
surface and let $\omega = h^2 dz \wedge d \bar{z}$ be the purely imaginary 
volume form on it, (i.e. $h$ is real). 
Let  $A$ be a unitary connection on a principal $U(1)$ bundle $P$
 i.e. $A$ is a purely imaginary valued one form i.e. $A = A^{(1,0)} + A^{(0,1)}$ such that $A^{(1,0)} = -\overline{A^{(0,1)}}$.   
Let ${\mathcal L}$ be a complex line bundle associated to $P$.
  Let $\Phi$ be a section of ${\mathcal L}$, i.e. 
$\Phi \in  \Gamma(\Sigma , {\mathcal L})$. 
There is a Hermitian metric $H$ on ${\mathcal L}$, i.e. the inner product $<\Phi_1, \Phi_2>_H = \Phi_1 H \bar{\Phi}_2$ is a smooth function on $\Sigma$. (Here $H$ is real). 

The pair $(A, \Phi)$ will be said to satisfy the vortex equations if

\hspace{1in} $(1)$ $ \rm{\;\;\;\;\;}$ $ F(A) = \frac{1}{2}(1-|\Phi|^2_H) \omega,$

\hspace{1in} $(2)$ $\rm{\;\;\;\;\;}$ $\bar{\partial}_A \Phi = 0,$

where $F(A)$ is the curvature of the connection $A$ and $d_A = \partial_A + \bar{\partial}_A $ is the decomposition of the covariant derivative operator
into $(1,0)$ and $(0,1)$ pieces.  
Let ${\mathcal S}$ be the space of solutions to $(1)$ and $(2)$.
 There is a gauge group $G$ acting on the space of $(A, \Phi)$ which leaves the equations invariant. Locally the gauge group   looks like ${\rm Maps} (\Sigma, U(1)).$ If $g$ is an $U(1)$ gauge transformation then 
$(A_1, \Phi_1)$ and $(A_2, \Phi_2)$ are gauge equivalent if 
$A_2 = g^{-1}dg + A_1 $ and $\Phi_2 = g^{-1} \Phi_1$. 
  Taking the quotient by the gauge group of ${\mathcal S}$ gives  the moduli 
space of solutions to these 
equations and is denoted by ${\mathcal M}$.
The vortex moduli space for $N$ vortices on a closed  Riemann surface $\Sigma$,  is well 
known to be the $Symm^N (\Sigma)$, where $Symm$ stands for the symmetric product.
 This is a compact  K\"{a}hler manifold, with the K\"{a}hler form given by Manton and Nasir as follows:
 $$\omega_{MN} = \frac{i}{2} \sum_{i,j=1}^N (\Omega(z_i) \delta_{ij} + 2 \frac{\partial b_i}{\partial \bar{z}_j}) d z_i \wedge d \bar{z}_i$$
 where $b_i$'s are terms appearing in the expansion of $log|\Phi|^2$ around a typical zero $z_i$ of $\Phi$ -- their derivatives  survive in the Samol's metric on the moduli space, Manton and Nasir, ~\cite{MN}.

  This form  is integral if we normalize  the volume $A$ of the Riemann 
surface to be in $4 \pi {\mathbb Z}^{+}$, ~\cite{MN}. In this case, if we take $A$ to be large enough, we show that the quantum bundle is a holomorphic bundle  $L$  whose 
curvature is proportional to $ \omega_{MN}$ and from the above  its holomorphic sections are  in  to one correspondence with the polarized sections constituting  the Hilbert space of quantization. 
In ~\cite{D}, ~\cite{DM} we had carried out this programme of geometric quantization of the vortex moduli space and had shown that the quantum line bundle is a Quillen determinant line bundle.

In this article, we calculate the dimension of this Hilbert space when the Riemann surface is a compact Riemann surface of genus $g$ (with $\mu =1$). In all case, if $A$ is sufficiently large,  (i.e. $\frac{A}{4 \pi}  > max ( N, g-1)$ ) it is the holomorphic Euler characteristc of the quantum bundle $L$.   The proof  is a simple application of the Kodaira Vanishing theorem, ~\cite{GH}.

The result also tallies with Romao's result for vortices on sphere, ~\cite{R},  as we explain in Remark $2$.

We must mention that in the arxiv there is a paper by Eriksson and
Romao ~\cite{ER}  where the Kahler quantization of the moduli space of vortices
is carried out using Deligne's point of view, and our result follows from
theirs. Our work is independent and uses the properties of the Manton-Nasir form.

\section{{\bf The dimension of the Hilbert Space of the vortices on a Riemann surface of genus $g$.}}

Let $X$ be the $N$-th symmetric product of a compact  Riemann surface $\Sigma$ of genus $g$. The volume of the Riemann surface has been normalized to $A= 4 \pi k$, $k \in {\mathbb Z}^+$. 

By Macdonald, ~\cite{Mac}, the cohomology ring of $X$ is given as follows. We follow the exposition given in ~\cite{MN}.

Let    $\alpha_i$, $i = 1,...,2g$ be one forms of $\Sigma$ whose cohomology classes are  the generators of $H^{1}(\Sigma, {\mathbb Z})$ and let the cohomology class of $\beta$ be the 
generator of $H^2(\Sigma, {\mathbb Z})$.  Note that $\beta $ is normalized such that $\int_{\Sigma} \beta =1$ and is of type $(1,1)$. The ring structure of $H^*(\Sigma, {\mathbb Z}) $ can be described as 

$[\alpha_i] [\alpha_j] =0$, $ i \neq j \pm g$;

$[\alpha_i] [\alpha_{i + g}] = - [\alpha_{i + g}][ \alpha_i] = [\beta]$,  $ 1 \leq i \leq g$,
where square brackets denote cohomolgy classes.
Let ${\alpha}_{ik}$ and ${\beta}_k$  are forms in $N$ product of $\Sigma$ (namely ${\Sigma}^{N}$) such that

Let $[\alpha_{ik}] = 1 \otimes ...\otimes 1 \otimes [\alpha_i] \otimes 1 \otimes...\otimes 1 \in H^1({\Sigma}^{N}, {\mathbb Z})$.

$[\beta_k] = 1 \otimes ...\otimes1 \otimes[\beta] \otimes 1 \otimes...\otimes 1 \in H^2({\Sigma}^{N}, {\mathbb Z})$

where $[\alpha_i]$ and $[\beta]$ are in the $k^th$ place. We take $\beta_k={p_k}^{*}(\beta)$ where $p_k:{\Sigma}^{N} \rightarrow \Sigma$ is the projection to the $k^{th}$ component.  Also, $\alpha_{ik} = p_k^*(\alpha_i)$. 

Then $H^*({\Sigma}^{N}, {\mathbb Z})$ is generated by $[\alpha_{ik}]$ and $[\beta_k]$ ( $1 \leq i \leq 2g, 1 \leq k \leq N$) with the following relations:

$[\alpha_{ik}][\alpha_{jk}] =0, $ $i \neq j \pm g$

$[\alpha_{ik}][\alpha_{i + g, k}] = - [\alpha_{i + g, k}] [\alpha_{ik}] = [\beta_k] $, $ 1 \leq i \leq g$

$[\alpha_{ik}] [\alpha_{jl}] = - [\alpha_{jl}] [\alpha_{ik}$], $k \neq l$.

Now define the following

$\zeta_i = \alpha_{i1} + ...+ \alpha_{iN}$, $1 \leq i \leq 2g.$

$\eta = \beta_1 + ...+ \beta_N$

Let $\zeta^{\prime}_i = \zeta_{i+g}$,  $ 1 \leq i \leq g$ and $\sigma_i = \zeta_i \zeta_i^{\prime}$.

These   forms are $S_N$-invariant and their cohomology classes  generate the cohomology of $X$.

\begin{lemma}
$\eta$ is positive. 
\end{lemma}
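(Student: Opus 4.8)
The plan is to show that $\eta$ is a positive $(1,1)$-class on $X$, that is, that it lies in the K\"ahler cone (equivalently, that a holomorphic line bundle with first Chern class $\eta$ is ample), which is exactly what the later application of Kodaira vanishing will require. I would first produce a positive representative on the product $\Sigma^N$ rather than on $X$ directly. Since $\beta$ is of type $(1,1)$ with $\int_\Sigma \beta = 1 > 0$ and $\dim_{\mathbb{C}} \Sigma = 1$, its class is represented by a positive (K\"ahler) area form on $\Sigma$, which I continue to denote $\beta$. On $\Sigma^N$ set $\tilde{\eta} = \sum_{k=1}^N p_k^* \beta$. Under the splitting $T\Sigma^N \cong \bigoplus_{k=1}^N p_k^* T\Sigma$ the form $\tilde{\eta}$ is block diagonal, its $k$-th block being the pullback of the positive form $\beta$ from the $k$-th factor; evaluating $\tilde{\eta}(v, Jv) = \sum_k \beta(v_k, Jv_k)$ shows it is positive definite. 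Hence $\tilde{\eta}$ is a product K\"ahler form and its class is a K\"ahler class on $\Sigma^N$.

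Next I would relate this to $\eta$ and descend. Because $\Sigma$ is a curve, $X = Symm^N(\Sigma) = \Sigma^N / S_N$ is a \emph{smooth} compact projective manifold and the quotient map $\pi \colon \Sigma^N \to X$ is finite and surjective, branched along the image of the big diagonal. Under the identification $H^*(X, \mathbb{Q}) \cong H^*(\Sigma^N, \mathbb{Q})^{S_N}$ furnished by $\pi^*$, the class $\eta = \sum_k \beta_k$ corresponds exactly to $\tilde{\eta} = \sum_k p_k^* \beta$, i.e.\ $\pi^* \eta = [\tilde{\eta}]$. Since $\eta$ is integral and of type $(1,1)$, the Lefschetz $(1,1)$-theorem gives a holomorphic line bundle $L$ on $X$ with $c_1(L) = \eta$, and then $c_1(\pi^* L) = \pi^* \eta = [\tilde{\eta}]$ is a K\"ahler class on the projective manifold $\Sigma^N$, so $\pi^* L$ is ample. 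To conclude I would invoke the standard fact that for a finite surjective morphism $\pi \colon Y \to X$ of projective varieties a line bundle is ample on $X$ if and only if its pullback is ample on $Y$; applied here, the ampleness of $\pi^* L$ forces $L$ to be ample, so $\eta = c_1(L)$ is positive.

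I expect the genuine obstacle to be only this final descent, and precisely because $\pi$ is \emph{branched}: one cannot simply average or push the positive form $\tilde{\eta}$ down to $X$, since $\pi$ fails to be a local biholomorphism along the diagonal (in local elementary-symmetric-function coordinates its Jacobian is a Vandermonde determinant that vanishes there), so an $S_N$-invariant K\"ahler form need not descend to a \emph{smooth} positive form. Passing to the line bundle and using the finite-morphism ampleness criterion sidesteps this analytic difficulty entirely; alternatively one could work in symmetric-function coordinates and verify by hand that $\tilde{\eta}$ extends to a smooth positive form across the branch locus, which is the computation I would scrutinize most carefully.
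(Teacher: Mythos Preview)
Your argument is correct and takes a different, more rigorous route than the paper. The paper's proof is very short: it writes $\eta$ in vortex-position coordinates $(z_1,\dots,z_N)$ as $\frac{i}{2A}\sum_j \Omega(z_j)\,dz_j\wedge d\bar z_j$, notes that each $\Omega(z_j)>0$, and declares $\eta$ positive by the pointwise definition in Griffiths--Harris, without ever addressing the diagonal. In effect the paper checks positivity only on the open stratum where the $z_j$ are distinct and leaves untreated precisely the descent issue you flag. Your approach---showing $\tilde\eta$ is K\"ahler on the product $\Sigma^N$ and then descending \emph{ampleness} (rather than the form itself) along the finite surjection $\pi\colon\Sigma^N\to X$---is the clean way to close that gap, and it yields exactly the conclusion (that $[\eta]$ is a K\"ahler class on $X$) needed for the subsequent appeal to Kodaira vanishing. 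The paper's version has brevity on its side and is adequate if one is willing to import from elsewhere that the Manton--Nasir K\"ahler structure extends smoothly over the diagonal; yours is self-contained.
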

\begin{proof}
The form $\eta$ can be realized as a $G$- invariant form  $G = S_N$, the symmetric group,   given by
 \begin{equation}
  \eta= \beta_1 + \ldots + \beta_N
 \end{equation}
where $\beta_j$ is the normalized volume form on $\Sigma$, i.e. $\int_{\Sigma} \beta_j = 1$.  
In other words, 
$\eta = \frac{1}{A} \omega_1 = \frac{i}{2A} \sum_{j=1}^{N} \Omega(z_j) d z_j \wedge d \bar{z_j}$.
 (notation as in ~\cite{MN}). 
By definition (see ~\cite{GH}, page 29) this is a positive form since $ \Omega(z_j) >0$ (see (2.1) in ~\cite{MN}) and $A >0$.

\end{proof}

{\bf Remark 1:}
 It should be noted that in our paper we have denoted  $\omega_{MN}$,  $\eta$ and $\sigma_i$ as forms rather than their cohomology classes. We have used parentheses to denote the cohomology classes.
 This distinction has not been made in the papers we have cited ~\cite{Mac}, ~\cite{MN}.
 
\begin{theorem}
The dimension of the Hilbert space of geometric quantization for $N$-vortices on a compact Riemann surface of genus $g$  is given by the holomorphic Euler characteristic of the 
quantum line bundle $L$ on the $X=Symm^N( \Sigma)$ which has curvature proportional to the Manton-Nasir form, $\omega_{MN}$, a K\"{a}hler form  
( which is integral if the volume of the surface is $ A= 4 \pi k$, $k \in {\mathbb Z}^+$ ). 
The dimension of the Hilbert space is $  {  k \choose N} $, where $k = \frac{A}{4 \pi}$ if $\frac{A}{4\pi} > max(N, g-1)$ .
\end{theorem}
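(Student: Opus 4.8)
The plan is to identify the first Chern class of $L$ explicitly in Macdonald's generators, use the Kodaira Vanishing Theorem to reduce $\dim H^0(X,L)$ to a holomorphic Euler characteristic, and then evaluate that Euler characteristic by Hirzebruch--Riemann--Roch. Throughout, set $n=N$ and write $\theta:=\sum_{i=1}^{g}\sigma_i$ for the theta class (the pullback of the theta divisor under the Abel--Jacobi map $X\to \mathrm{Jac}(\Sigma)$).

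First I would compute the cohomology class of the quantum bundle. Since the curvature of $L$ is proportional to $\omega_{MN}$, the class $c_1(L)$ is the integral normalization of $[\omega_{MN}]$. Reading off the Manton--Nasir form, using $A=4\pi k$ and the form/class distinction of Remark $1$, I expect
$$ c_1(L) = (k-N)\,\eta + \theta . $$
As a consistency check, for $g=0$ one has $\Sigma=\mathbb{P}^1$, $X=\mathbb{P}^N$, $\theta=0$, and $\eta$ the hyperplane class, so that $L=\mathcal{O}(k-N)$ and $\dim H^0=\binom{k}{N}$, which matches Romão's normalization (cf.\ Remark $2$ and \cite{R}). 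Pinning down this class from the explicit $b_i$-terms in $\omega_{MN}$, and in particular verifying that the coefficient of $\theta$ is exactly $1$, is the first delicate point.

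Next I would apply Kodaira vanishing. Macdonald's formula for the total Chern class of the symmetric product, $c(TX)=(1+\eta)^{N+1-g}\exp\!\big(-\theta/(1+\eta)\big)$ (see \cite{Mac}), gives $c_1(K_X)=(g-N-1)\eta+\theta$, so that
$$ c_1\big(K_X^{-1}\otimes L\big) = (k-g+1)\,\eta . $$
By the Lemma the class $\eta$ is positive, hence for $k>g-1$ the bundle $K_X^{-1}\otimes L$ is positive. Writing $L=K_X\otimes\big(K_X^{-1}\otimes L\big)$, the Kodaira Vanishing Theorem (\cite{GH}) yields $H^q(X,L)=0$ for all $q>0$, so that $\dim H^0(X,L)=\chi(X,L)$; the hypothesis $k>N$ guarantees in addition that $L$ itself is positive, so the quantization is nondegenerate. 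Note how the two numerical hypotheses arise naturally here: $k>g-1$ is exactly the positivity threshold for $K_X^{-1}\otimes L$, and $k>N$ is the Bradlow-type bound.

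Finally I would evaluate $\chi(X,L)=\int_X e^{c_1(L)}\,\mathrm{td}(X)$. Writing $\theta=\sum_{i=1}^{g}\phi_i$ with $\phi_i^2=0$, the total Chern class factors as $(1+\eta)^{N+1-2g}\prod_{i=1}^{g}(1+\eta-\phi_i)$, exhibiting the formal Chern roots of $TX$ as $\eta$ with multiplicity $N+1-2g$ together with $g$ pairs $(r_i,s_i)$ obeying $r_i+s_i=\eta$, $r_is_i=-\phi_i$. Both $\mathrm{ch}(L)=e^{(k-N)\eta}\prod_i(1+\phi_i)$ and $\mathrm{td}(X)$ then become explicit power series in $\eta$ and the $\phi_i$, and the integral collapses onto the classical intersection numbers $\int_X \eta^{N-b}\theta^{b}=g!/(g-b)!$ (Poincar\'e's formula, \cite{Mac}). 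I expect the resulting finite sum to telescope to $\binom{k}{N}$. This last evaluation is the main obstacle: it is the one genuinely computational step, and it is where the degree $d=k+g-1$ of the line bundle on $\Sigma$ associated with $L$ enters — its space of sections has dimension $k$ precisely because $k>g-1$ forces $h^1=0$ — matching the heuristic count $\dim \wedge^{N}H^0(\Sigma,M)=\binom{k}{N}$.
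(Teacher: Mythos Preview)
Your proposal is correct and matches the paper exactly in Step~1: both compute $c_1(L\otimes K_X^{-1})=(k-g+1)[\eta]$ from $c_1(L)=(k-N)[\eta]+\theta$ and $c_1(TX)=(N-g+1)[\eta]-\theta$, then invoke positivity of $\eta$ (the paper's Lemma~2.1) and Kodaira vanishing to conclude $\dim H^0(X,L)=\chi(X,L)$.

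The difference is in Step~2. You propose to evaluate $\int_X e^{c_1(L)}\mathrm{td}(X)$ from scratch via the Chern-root factorization and Poincar\'e's intersection numbers $\int_X\eta^{N-b}\theta^{b}=g!/(g-b)!$, hoping the sum telescopes. The paper instead quotes Macdonald's ready-made Euler-characteristic formula for bundles on $\mathrm{Sym}^N\Sigma$, then uses Macdonald's identity (his (5.4)) that each $\sigma_i$ may be replaced by $\eta$ inside the top-degree extraction $\kappa_N\{\cdots\}$. This collapses the expression to a single-variable one, $\kappa_N\{e^{(k-N)\eta}(\eta/(1-e^{-\eta}))^{N+1}\}$, which is then evaluated by the residue substitution $\varepsilon=1-e^{-\eta}$: the answer is the coefficient of $\varepsilon^N$ in $(1-\varepsilon)^{-(k-N)-1}$, namely $\binom{k}{N}$. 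So the paper outsources precisely the telescoping algebra you flag as ``the main obstacle'' to Macdonald's $\sigma_i\mapsto\eta$ trick and a one-variable residue, whereas your route would rederive that trick implicitly. Both are valid; the paper's is shorter.
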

  
\begin{proof}

Note that the complex dimension of $X$ is $N$.

$\omega_{MN}$ is integral if $A =  4 \pi k$, $k \in {\mathbb Z}^+$, ~\cite{MN}. Thus there is a quantum holomorphic bundle, $L$ whose curvature is proportional to $\omega_{MN}$.

{\bf Step 1} We prove that the dimension of the Hilbert space for a Riemann surface of genus $g$  is  the holomorphic Euler characteristic for the quantum bundle $L$  whose curvature is $\omega_{MN}$.

By ~\cite{MN} the Manton Nasir form is given by
\begin{equation}
 \omega_{MN}= (A-4\pi N)\eta +4\pi(\sigma_1 +\ldots+ \sigma_g) + \text{exact terms}.
\end{equation}
Let $[\omega_{MN}]$ signify cohomology class of $\omega_{MN}$. Then
\begin{equation}
\frac {[\omega_{MN}]}{4\pi}=  (\frac{A}{4\pi}-N)[\eta] +( [\sigma_1]+ \ldots +[\sigma_g)
\end{equation}
It  is an integral form if $\frac{A}{4\pi}$ is integral.
 
 We also have from ~\cite{MN} 
 \begin{equation}
  c_1(TX )=(N-g+1)[\eta] -([\sigma_1] + \ldots +[\sigma_g])
 \end{equation}
where $X$ is the symmetric product.

From the above two equations we have
\begin{equation}
\frac {[\omega_{MN}]}{4\pi} + c_1(TX)=(\frac{A}{4\pi} - g+1)[\eta]
\end{equation}
 Recall that  $L$ is the line bundle representing  the first term in left hand side of the above equation. Let  $K^{*}$ be the anti-canonical bundle. Then the tensor product  $L^{\prime} = L \otimes K^*$ is a positive bundle  since $(\frac{A}{4\pi}-g+1) >0$ and  $\eta$ is positive by Lemma $(2.1)$. Then $L = L^{\prime} \otimes K$ has,  by Kodiara Vanishing theorem, zero higher cohomology,  ~\cite{GH}, page 154. Hence the
 global sections are given by the holomorphic Euler characteristic for $L$.

{\bf Step 2:} 
The  Hirzebruch-Riemann Roch theorem implies  that the holomorphic Euler characteristic $H.E.(X,L)$  of a $L$  is computable in terms of the Chern class $c_1(L) $ (the class of $\omega_{MN}$) and the Todd polynomial  in the Chern classes of the holomorphic tangent bundle of $X$. In fact,  $H.E.(X, L) = \int_X ch(L) td(X) $ where $ch(L) $ is the Chern character of $L$ and $td(X)$ is the Todd class of the tangent bundle of $X$.

However, in our case, Macdonald has already calculated the holomorphic Euler char using Hirzebruch-Riemann Roch formula in ~\cite{Mac} and we can invoke his results to calculate the holomorphic Euler characteristic of $L$.

For a vector bundle $E$ on $X$ its holomorphic Euler characteristic  is given by
 \begin{equation}
 H.E.(E)=\kappa_N \{ (e^{\delta_1}+ \ldots + e^{\delta_n} )e^{-[\eta] g}{(\frac{[\eta]}{1-e^{-[\eta]}})}^{n+1} \}
 \end{equation}
 where product of $1+\delta_i$ determine the Chern classes of the $E$ and $\kappa_N$ signifies we have to only consider highest degree forms,  ~\cite{Mac}.

  Now Chern class of the quantum bundle $L$ is
  \begin{equation}
  c_1(L)=(\frac{A}{4\pi}-N)[\eta] +[\sigma_1]  +\ldots + [\sigma_g]
  \end{equation}
 Substituting in the above equation
 \begin{equation}
  H.E.(L)= \kappa_N \{ e^{(\frac{A}{4\pi}-N)[\eta] +[\sigma_1]  +\ldots + [\sigma_g]}e^{-[\eta] g}{(\frac{[\eta]}{1-e^{-[\eta]}})}^{N+1} \}
 \end{equation}
By $(5.4)$ in ~\cite{Mac}, we can replace each $\sigma_i$ by $\eta$ (as in page $334$ below equation 14.10~\cite{Mac}), to get 
\begin{eqnarray*}
H.E.(L) &=& \kappa_N  \{e^{(\frac{A}{4\pi}-N)[\eta]}{(\frac{[\eta]}{1-e^{-[\eta]}})}^{N+1})\\
&=& Res_{[\eta]}(e^{(\frac{A}{4\pi}-N)[\eta]}{(\frac{1}{1-e^{-[\eta]}})}^{N+1}) 
\end{eqnarray*}
Substituting $\varepsilon =1-e^{-[\eta]} $, $d \varepsilon = (1 - \varepsilon) d [\eta ]$ and letting $K_0 = \frac{A}{4 \pi} -N > 0$, we get 

$H.E..(L) = Res_{\varepsilon} \frac{(1-\varepsilon)^{-K_0- 1}}{\varepsilon^{N+1}}$.
 
Thus $H.E.(L) = $  the coefficient of $\varepsilon^N $ in 
$(1 - \varepsilon)^{-K_0 -1}$
 which is ${ K_0 + N  \choose N }$ or $ {  k \choose N}$ where $k = \frac{A}{4 \pi} $. This number is always $\geq 1$.

\end{proof}

{\bf Remark 2:}
 In \cite{R} Romao  has calculated the dimension of the Hilbert space for vortices on the sphere to be ${ N+l \choose l} $ where $l$ is Chern class of the quantum bundle. Here we identify
 the  second cohomology of $CP^N$ (the vortex moduli space for a sphere) with the integers. Since
 ${ N+l \choose l} $ is equal to $ {N+l \choose N} $ and  since Manton-Nasir form in the  genus zero case is $(\frac{A}{4 \pi}-N) \eta$, therefore  $l= \frac{A}{4 \pi}-N $ and the result tallies with our result .

\end{document}